\newtheorem{theorem}{Theorem}[section] % Number theorems within sections
\newtheorem{lemma}[theorem]{Lemma}
\newtheorem{definition}[theorem]{Definition}
\theoremstyle{remark}
\newtheorem{caseenv}{Case}[section] % Case environment, numbered within sections
\newcommand{\Zplus}{\mathbb{Z}^+} % Set of positive integers (1, 2, 3, ...)
\newcommand{\floor}[1]{\left\lfloor #1 \right\rfloor} % Floor function
\newcommand{\ceil}[1]{\left\lceil #1 \right\rceil} % Ceiling function
\newcommand{\oeislink}[1]{\href{https://oeis.org/#1}{\texttt{#1}}}
\newcommand{\indicator}[1]{\mathbf{1}_{#1}} % For indicator function
\title{On a Family of Nested Recurrences and Their Arithmetical Solutions}
\author{Benoit Cloitre \\ \small Paris, France \\ \small \texttt{firstname.lastname(AT)proton.me}} % Placeholder for author information
\date{} % Empty date for arXiv submission (arXiv adds its own stamp)
\begin{document}

\maketitle

\begin{abstract}
A family of nested recurrence relations $a(n+1) = n - a^{(m)}(n) + a^{(m+1)}(n)$, parameterized by an integer $m \ge 1$ with initial condition $a(1)=1$, is studied. We prove that $a(n)=n-h(n)$ is the unique solution satisfying this condition, where $h(n)$ is an arithmetical sequence in which each non-negative integer $k$ appears $mk+1$ times, with $h(n)$ 1-indexed such that $h(1)=0$. An explicit floor formula for $h(n)$ (and thus for $a(n)$) is derived. The proof of the main theorem involves establishing a key identity for $h(n)$ that arises from the recurrence; this identity is then proved using arithmetical properties of $h(n)$ and the iterated function $a^{(m)}(n)$ at critical boundary points. Combinatorial interpretations for $a(n)$ and its partial sums (for $m=2$), and connections to The On-Line Encyclopedia of Integer Sequences (OEIS), including generalizations of Connell's sequence, are also discussed.
\end{abstract}

\noindent\textbf{Keywords:} Nested recurrence relation, Meta-Fibonacci sequence, Integer sequence, Combinatorial interpretation, k-appearance sequence, Connell sequence, Slowly growing sequence.

\noindent\textbf{MSC 2020 Classification:} Primary 11B37 (Recurrences); Secondary 05A19 (Combinatorial identities), 11B39 (Fibonacci and Lucas numbers and generalizations), 11B83 (Special sequences and polynomials).

\section{Introduction} \label{sec:intro}

Nested recurrence relations, where terms depend on previous terms iterated through the sequence itself, appear in diverse mathematical fields including number theory, combinatorics, and the study of discrete dynamical systems \cite{Guy2004, Hofstadter1979}. Recent research has often focused on finding combinatorial interpretations for solutions to such recurrences, frequently through constructions involving labeled infinite trees or specific counting arguments related to their iteration (see, e.g., \cite{Deugau2009Ruskey, Fox2022}).

This paper focuses on a particular intersection of sequence types. Firstly, we consider sequences $(a(n))_{n \ge 1}$ defined by simple conditional increment rules based on a predefined infinite set of positive integers $S \subset \Zplus$ (where $\Zplus = \{1, 2, 3, \dots\}$). Specifically, we are interested in "slowly growing sequences" where the increment $a(n+1)-a(n)$ is always 0 or 1. Such sequences, given an initial condition $a(1)=C_0$, can be defined as:
\begin{itemize}
    \item $a(1)=C_0$;
    \item For $n \ge 1$: $a(n+1)=a(n)$ if $n \in S$;
    \item For $n \ge 1$: $a(n+1)=a(n)+1$ if $n \notin S$. (Here, $\indicator{P}$ equals 1 if proposition $P$ is true, and 0 otherwise).
\end{itemize}
These sequences are non-decreasing. If $a(1)=1$, then $a(n)$ counts the number of positive integers less than $n$ that are not in $S$, plus one: $a(n) = 1 + |\{x \in \{1, \dots, n-1\} \mid x \notin S\}|$.
This can be rewritten as $a(n) = 1 + (n-1) - |S \cap [1, n-1]|$.
If we define an auxiliary sequence $h(n) = |S \cap [1, n-1]|$, then $a(n)=n-h(n)$. This definition of $h(n)$ implies $h(1)=0$.

Secondly, many well-known sequences are defined by compact "one-line" nested recurrence relations. The central theme of this paper is the exploration of sequences that belong to \emph{both} these categories: those conditionally defined by a simple arithmetical set $S$ as above, AND simultaneously satisfying a non-trivial nested recurrence relation. Such a dual characterization is uncommon and often points to deep underlying mathematical structure. Indeed, establishing these connections is an active area of research. For example, Fox \cite{Fox2022} explores families of nested recurrences whose solutions (leaf-counting functions in generalized trees) have frequency sequences related to digits in Zeckendorf-like representations based on linear recurrent sequences. This highlights a general theme: translating complex nested dynamics into more structured arithmetical or combinatorial descriptions.

Several famous sequences illustrate parts of this landscape:
\begin{itemize}
    \item Hofstadter's G-sequence (\oeislink{A005206}), defined by $a_G(0)=0;\, a_G(n) = n - a_G(a_G(n-1))$ for $n > 0$, is slowly growing ($a_G(n)-a_G(n-1) \in \{0,1\}$). The set $S_G = \{k-1 \mid k \ge 1, a_G(k)=a_G(k-1)\}$ (where the sequence does not increment) is precisely the set of integers $j \ge 0$ such that $j+1$ is a Wythoff B-number (an element of \oeislink{A001950}, i.e., of the form $\lfloor k\phi^2 \rfloor$, where $\phi=(1+\sqrt{5})/2$ is the golden ratio). This set $S_G$ is thus arithmetically descriptible \cite{Hofstadter1979, Kimberling2016}. Combinatorially, $a_G(n)$ counts the number of Wythoff A-numbers ($\lfloor k\phi \rfloor$, \oeislink{A000201}) less than or equal to $n$.
    \item The sequence $a_K(n) = \floor{\sqrt{2n}+1/2}$ (\oeislink{A002024}, "$k$ appears $k$ times", $n \ge 1$, $a_K(1)=1$) also satisfies $a_K(n+1)-a_K(n) \in \{0,1\}$. It increments if and only if $n$ is a triangular number $T_j=j(j+1)/2$. Thus, for this sequence, $a_K(n+1)=a_K(n)$ if $n$ is \emph{not} a triangular number (so $S_K = \Zplus \setminus \{T_j \mid j \ge 1\}$). $a_K(n)$ also satisfies the nested recurrence $a_K(n)=a_K(n-a_K(n-1))+1$.
    \item In contrast, the Hofstadter-Conway \$10,000 sequence $C(n)=C(C(n-1))+C(n-C(n-1))$ (\oeislink{A004001}) is slowly growing, but the set $S_C = \{n \mid C(n+1)=C(n)\}$ is determined by the asymptotic behavior $C(n)/n \to 1/2$, a result established by Mallows \cite{Mallows1991}, and is not a simple, predefined arithmetical set in the same way as $S_G$ or $S_K$.
\end{itemize}
Our main result shows that for each $m \ge 1$, the sequence $a(n)$ obtained by $a(1)=1$ and $a(j+1)-a(j) = \indicator{j \notin S'_m}$ (where $S'_m = \{m\binom{k}{2}+k \mid k \ge 1\}$) also grows according to the nested rule (\ref{eq:main_recurrence_intro_std}) given below.

This paper focuses on such a family. For a given integer $m \ge 1$, let $S'_m = \{T_j^{\text{orig}} \mid j \ge 1\}$ be the set of non-zero "generalized $m$-polygonal numbers", where $T_j^{\text{orig}} = m\binom{j}{2}+j$. Let $a(n)$ be the sequence defined by $a(1)=1$ and, for $n \ge 1$, $a(n+1)=a(n)$ if $n \in S'_m$, and $a(n+1)=a(n)+1$ if $n \notin S'_m$. We prove that this sequence $a(n)$ is precisely the solution to the nested recurrence relation:
\begin{equation} \label{eq:main_recurrence_intro_std}
a(n+1) = n - a^{(m)}(n) + a^{(m+1)}(n),
\end{equation}
with the initial condition $a(1)=1$. This recurrence was studied by Connell \cite{Connell1959} in a related context and generalized by Iannucci and Mills-Taylor \cite{Iannucci1999} to the form we investigate. We establish that this $a(n)$ is given by $n-h(n)$, where $h(n)$ is the appropriately 1-indexed $k$-appearance sequence (formally defined in Section \ref{sec:solution_structure}).

The paper is organized as follows. In Section \ref{sec:solution_structure}, we formally define the auxiliary sequence $h(n)$ that satisfies $h(1)=0$ and detail its properties, leading to the candidate solution $a_{\text{sol}}(n)=n-h(n)$ and its explicit formula. Section \ref{sec:main_theorem_statement} states the Main Theorem: that $a(n)$ defined by the nested recurrence (\ref{eq:main_recurrence_intro_std}) with $a(1)=1$ equals $a_{\text{sol}}(n)$. The proof, detailed in Section \ref{sec:proof_key_identity}, relies on transforming the recurrence into a key identity involving $h(n)$ (Identity \ref{eq:key_identity_shifted}). This identity is then proven using several arithmetical lemmas established in Section \ref{sec:arithmetical_lemmas}. Section \ref{sec:comb_interp} explores the combinatorial interpretation of $a(n)$, its connection to OEIS sequences, and a specific combinatorial interpretation for its partial sums when $m=2$. Section \ref{sec:alt_indexing_remarks} provides brief remarks on an alternative indexing for $h_m(n)$. We conclude in Section \ref{sec:conclusion}.

\section{The Structure of the Proposed Solution} \label{sec:solution_structure}

To satisfy the initial condition $a(1)=1$ with a solution of the form $a(n)=n-h(n)$, it forces $h(1)=0$. This guides our definition:

\begin{definition}[The auxiliary sequence $h(n)$] \label{def:h_shifted}
Let $(h(n))_{n \ge 1}$ be a sequence of non-negative integers such that:
\begin{enumerate}
    \item $h(1)=0$.
    \item For each integer $k \ge 0$, the value $k$ appears exactly $N_k = mk+1$ times in the sequence $h(n)$.
\end{enumerate}
The sequence $h(n)$ is non-decreasing by construction.
\end{definition}

Let $T^{\star}_k$ be the first index $n \ge 1$ such that $h(n)=k$.
\begin{definition}[Starting index $T^{\star}_k$] \label{def:Tk_shifted}
$T^{\star}_0 = 1$, since $h(1)=0$. For $k \ge 1$, $T^{\star}_k$ is given by:
$$T^{\star}_k = T^{\star}_0 + \sum_{j=0}^{k-1} N_j = 1 + \sum_{j=0}^{k-1} (mj+1) = 1 + m\frac{k(k-1)}{2} + k$$
Thus, by definition, $h(n)=k \iff T^{\star}_k \le n < T^{\star}_{k+1}$.
Let $T_k^{\text{orig}} = m\binom{k}{2}+k$. Then $T^{\star}_k = T_k^{\text{orig}}+1$ for $k \ge 0$ (with $T_0^{\text{orig}}=0$).
\end{definition}

With $h(n)$ established, we define our candidate solution for the recurrence (\ref{eq:main_recurrence_intro_std}).
\begin{definition}[Candidate solution $a_{\text{sol}}(n)$] \label{def:a_candidate}
Let $a_{\text{sol}}(n) = n - h(n)$ for $n \ge 1$, where $h(n)$ is from Definition \ref{def:h_shifted}.
\end{definition}
This candidate solution satisfies the initial condition $a_{\text{sol}}(1) = 1 - h(1) = 1-0=1$.
The explicit formula for $h(n)$ is $h(n) = \floor{\frac{m-2+\sqrt{(m-2)^2+8m(n-1)}}{2m}}$ for $n \ge 1$. This aligns with the corrected formula in Iannucci and Mills-Taylor \cite{Iannucci1999errata}.
Thus, the explicit formula for the candidate solution $a_{\text{sol}}(n)$ is:
\begin{equation} \label{eq:asol_explicit}
a_{\text{sol}}(n) = n - \floor{\frac{m-2+\sqrt{(m-2)^2+8m(n-1)}}{2m}} \quad \text{for } n \ge 1
\end{equation}

\section{Main Theorem} \label{sec:main_theorem_statement}
\begin{theorem} \label{thm:main_article}
The sequence $a(n)$ defined by the recurrence relation (\ref{eq:main_recurrence_intro_std}) and initial condition $a(1)=1$ is uniquely given by $a(n) = a_{\text{sol}}(n)$ for all $n \ge 1$. The Main Theorem shows that this explicit form $a_{\text{sol}}(n)$ is the unique solution satisfying the initial condition and recurrence.
\end{theorem}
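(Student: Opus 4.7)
The plan is to proceed by strong induction on $n$, simultaneously establishing that the recurrence well-defines $a(n+1)$ and that $a(n) = a_{\text{sol}}(n)$. Uniqueness is essentially automatic: since $a_{\text{sol}}(j) \le j$ for all $j \ge 1$, every iterate $a^{(\ell)}(n)$ lies in $\{1,\ldots,n\}$, so the right-hand side of (\ref{eq:main_recurrence_intro_std}) depends only on previously determined values $a(1),\ldots,a(n)$. The substantive content is therefore to verify that $a_{\text{sol}}$ itself satisfies (\ref{eq:main_recurrence_intro_std}).

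Substituting $a_{\text{sol}}(n)=n-h(n)$ into the recurrence and using $a_{\text{sol}}(a_{\text{sol}}^{(m)}(n)) = a_{\text{sol}}^{(m)}(n) - h(a_{\text{sol}}^{(m)}(n))$ gives
\begin{equation*}
(n+1)-h(n+1) \;=\; n - a_{\text{sol}}^{(m)}(n) + a_{\text{sol}}^{(m)}(n) - h\bigl(a_{\text{sol}}^{(m)}(n)\bigr),
\end{equation*}
which collapses to the key identity
\begin{equation*}
h(n+1) \;=\; 1 + h\bigl(a_{\text{sol}}^{(m)}(n)\bigr) \qquad \text{for all } n \ge 1.
\end{equation*}
The theorem therefore reduces to this purely arithmetical identity about the explicit sequence $h$.

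To attack it, I would first track how $a_{\text{sol}}$ acts on the level structure. For $n$ in level $k$ (that is, $T^{\star}_k \le n < T^{\star}_{k+1}$) we have $a_{\text{sol}}(n) = n - k$, and the spacing $T^{\star}_{k+1}-T^{\star}_k = mk+1$ lets one read off $h(a_{\text{sol}}(n))$ in closed form: it equals $k-1$ on most of level $k$ but remains at $k$ for the final few elements, where $a_{\text{sol}}(n)$ lands back in level $k$. Iterating this rule $m$ times gives a formula for $a_{\text{sol}}^{(m)}(n)$ as $n$ minus a short sum of level labels. The verification then splits into two regimes according to whether $h$ is flat or jumps at $n$: on the interior of a level, where $h(n+1)=h(n)=k$, the identity demands $h(a_{\text{sol}}^{(m)}(n))=k-1$; at the jump $n=T^{\star}_{k+1}-1$, where $h(n+1)=k+1$, it demands $h(a_{\text{sol}}^{(m)}(n))=k$.

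The main obstacle, as the introduction foreshadows, is the behaviour at these boundary points. At $n=T^{\star}_{k+1}-1$ the iterate $a_{\text{sol}}^{(m)}(n)$ lands exactly on $T^{\star}_k$ or $T^{\star}_k-1$, and the entire statement hinges on pinning down which side of that jump it falls on. This reduces to an exact identity between the closed form $T^{\star}_k = 1+m\binom{k}{2}+k$ and a telescoping expression $n - \sum_{j=0}^{m-1} h(a_{\text{sol}}^{(j)}(n))$, which is where the arithmetical lemmas of Section \ref{sec:arithmetical_lemmas} should do the real work. I would establish the boundary case first and then deduce the interior case from it by shifting $n$ by one and re-invoking the same lemmas, thereby closing the induction.
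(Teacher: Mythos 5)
Your overall plan coincides with the paper's: induction on $n$ reduces the theorem to the key identity $h(n+1)-1 = h(a_{\text{sol}}^{(m)}(n))$, which is then proved by splitting into the jump case $n = T^{\star}_{k+1}-1$ and the interior case $T^{\star}_k \le n < T^{\star}_{k+1}-1$, with the behaviour of $a_{\text{sol}}^{(m)}$ at boundary points pinned down by exactly the telescoping computation $a_{\text{sol}}^{(m)}(n) = n - \sum_{j=0}^{m-1} h\bigl(a_{\text{sol}}^{(j)}(n)\bigr)$ that underlies Lemma \ref{lem:P_shifted}. Your well-definedness remark (all iterates stay in $\{1,\dots,n\}$ because $a_{\text{sol}}(j) \le j$) is, if anything, more careful than the paper's one-line uniqueness claim.

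The genuine gap is in your treatment of the interior case. There the identity demands $h(a_{\text{sol}}^{(m)}(n)) = k-1$, i.e. $T^{\star}_{k-1} \le a_{\text{sol}}^{(m)}(n) < T^{\star}_k$ with a \emph{strict} upper bound. The tools you invoke --- the boundary evaluations $a_{\text{sol}}^{(m)}(T^{\star}_k) = T^{\star}_{k-1}$ and $a_{\text{sol}}^{(m)}(T^{\star}_{k+1}-1) = T^{\star}_k$ together with monotonicity --- only sandwich $a_{\text{sol}}^{(m)}(n)$ in the closed interval $[T^{\star}_{k-1}, T^{\star}_k]$; ``shifting $n$ by one and re-invoking the same lemmas'' cannot by itself exclude the right endpoint, and if $a_{\text{sol}}^{(m)}(n) = T^{\star}_k$ occurred, the identity would fail. (The paper is vulnerable at the same spot: its Case \ref{case:key_identity_internal} rules out equality by a contradiction argument that presupposes the very identity being proved, which is circular as written.) A correct patch is available from the level-tracking you already set up: since $a_{\text{sol}}(x) \le x$ for every $x$, an orbit that once falls below $T^{\star}_k$ stays below it; and for interior $n = T^{\star}_k + i$ with $0 \le i \le mk-1$ (note $k \ge 1$ here, as the interior case is empty for $k=0$), the orbit loses exactly $k$ per step while it remains in level $k$, so it falls below $T^{\star}_k$ after $\lfloor i/k \rfloor + 1 \le m$ steps. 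Together with the lower bound from Property $P2_{\star}$ and monotonicity, this closes the interior case and hence the induction.
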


\begin{proof}[Proof Strategy for Main Theorem]
The proof proceeds by induction on $n$.
\begin{itemize}
    \item \textbf{Base case ($n=1$):} The recurrence definition states $a(1)=1$. The candidate solution yields $a_{\text{sol}}(1) = 1-h(1) = 1-0=1$. Thus $a(1)=a_{\text{sol}}(1)$.
    \item \textbf{Inductive hypothesis:} Assume $a(j)=a_{\text{sol}}(j)$ for all $1 \le j \le n$, for some $n \ge 1$.
    \item \textbf{Inductive step:} We aim to show $a(n+1)=a_{\text{sol}}(n+1)$.
    From the recurrence (\ref{eq:main_recurrence_intro_std}): $a(n+1) = n - a^{(m)}(n) + a^{(m+1)}(n)$.
    By the inductive hypothesis, $a^{(m)}(n) = a_{\text{sol}}^{(m)}(n)$ and $a^{(m+1)}(n) = a_{\text{sol}}^{(m+1)}(n)$.
    Let $X = a_{\text{sol}}^{(m)}(n)$. Then $a_{\text{sol}}^{(m+1)}(n) = a_{\text{sol}}(X) = X - h(X)$ (using Definition \ref{def:a_candidate}).
    So, $a(n+1) = n - X + (X - h(X)) = n - h(X) = n - h(a_{\text{sol}}^{(m)}(n))$.
    We need to show this is equal to $a_{\text{sol}}(n+1) = (n+1) - h(n+1)$.
    Thus, the proof reduces to proving the following key identity for $n \ge 1$:
    \begin{equation} \label{eq:key_identity_shifted_prep}
    (n+1) - h(n+1) = n - h(a_{\text{sol}}^{(m)}(n)),
    \end{equation}
    which simplifies to:
    \begin{equation} \label{eq:key_identity_shifted}
    h(n+1) - 1 = h(a_{\text{sol}}^{(m)}(n)).
    \end{equation}
\end{itemize}
The uniqueness of the solution $a(n)$ follows because the recurrence relation defines $a(n+1)$ uniquely from previous terms and their iterations, given a fixed initial value $a(1)$.
\end{proof}

The core of the proof of Theorem \ref{thm:main_article} lies in establishing Identity (\ref{eq:key_identity_shifted}). The subsequent sections are dedicated to this task.

\section{Arithmetical Lemmas for $h(n)$ and $a_{\text{sol}}(n)$} \label{sec:arithmetical_lemmas}
The proof of Identity (\ref{eq:key_identity_shifted}) uses the sequence $h(n)$ (from Def. \ref{def:h_shifted}) and its corresponding $T^{\star}_k$ values.

\begin{lemma}[Monotonicity] \label{lem:mono_shifted}
The function $a_{\text{sol}}(n)=n-h(n)$ is non-decreasing for $n \ge 1$. Consequently, $a_{\text{sol}}^{(j)}(n)$ is non-decreasing for any $j \ge 1$.
\end{lemma}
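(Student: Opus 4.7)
The approach is essentially a direct unpacking of Definition \ref{def:h_shifted}: since $h(n)$ is constructed so that each non-negative integer value $k$ appears as a contiguous block of length $mk+1$, the sequence $h$ is non-decreasing and its consecutive differences $h(n+1)-h(n)$ lie in $\{0,1\}$. From this the monotonicity of $a_{\text{sol}}$ falls out immediately by a two-case computation.

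In more detail, the plan is as follows. First I would record the observation that, for every $n \ge 1$, either $n$ and $n+1$ lie in the same block $[T^\star_k, T^\star_{k+1}-1]$ (giving $h(n+1)=h(n)$) or $n+1 = T^\star_{k+1}$ (giving $h(n+1)=h(n)+1$). In either case,
\[
a_{\text{sol}}(n+1) - a_{\text{sol}}(n) \;=\; 1 - \bigl(h(n+1)-h(n)\bigr) \;\in\; \{0,1\},
\]
so $a_{\text{sol}}$ is non-decreasing (indeed slowly growing). Next I would verify that $a_{\text{sol}}$ maps $\Zplus$ into $\Zplus$, so that iteration makes sense: the bound $a_{\text{sol}}(n) = n - h(n) \le n$ is immediate from $h(n) \ge 0$, and $a_{\text{sol}}(n) \ge a_{\text{sol}}(1) = 1$ follows from the non-decrease just established together with the base value $h(1)=0$.

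Finally, the statement about $a_{\text{sol}}^{(j)}$ is a one-line induction on $j$: if $f,g: \Zplus \to \Zplus$ are non-decreasing then so is $f \circ g$, since $n_1 \le n_2$ implies $g(n_1) \le g(n_2)$ and hence $f(g(n_1)) \le f(g(n_2))$. Applying this with $f = a_{\text{sol}}$ and $g = a_{\text{sol}}^{(j-1)}$ completes the step.

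There is no real obstacle here; the only point that requires a moment of care is ensuring that iteration is well-defined, i.e., that $a_{\text{sol}}(n) \ge 1$ for all $n \ge 1$, which is what prevents the iterates from falling outside the domain where $h$ and $a_{\text{sol}}$ have been defined. Once that is noted, the lemma is a direct consequence of the block structure of $h$.
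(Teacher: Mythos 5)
Your proposal is correct and follows essentially the same route as the paper: compute $a_{\text{sol}}(n+1)-a_{\text{sol}}(n) = 1-(h(n+1)-h(n)) \in \{0,1\}$ from the block structure of $h$, then conclude for the iterates because a composition of non-decreasing functions is non-decreasing. Your extra check that $a_{\text{sol}}$ maps $\Zplus$ into $\Zplus$ (so that iteration is well-defined) is a point the paper leaves implicit, and it is a worthwhile addition rather than a deviation.
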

\begin{proof}
Our proof relies on $a_{\text{sol}}(n+1)-a_{\text{sol}}(n) = ((n+1)-h(n+1)) - (n-h(n)) = 1 - (h(n+1)-h(n))$.
Since $h(n)$ is non-decreasing by Definition \ref{def:h_shifted}, $h(n+1)-h(n)$ is either $0$ (if $n+1 < T^{\star}_{h(n)+1}$) or $1$ (if $n+1 = T^{\star}_{h(n)+1}$).
So $a_{\text{sol}}(n+1)-a_{\text{sol}}(n)$ is either $1$ or $0$. Thus $a_{\text{sol}}(n)$ is non-decreasing.
Iterated applications of a non-decreasing function are also non-decreasing.
\end{proof}

\begin{lemma}[Behavior at Boundary Indices] \label{lem:P_shifted}
Let $a_{\text{sol}}(n) = n-h(n)$ where $h(n)$ is from Definition \ref{def:h_shifted}. For $m \ge 1$:
\begin{enumerate}
    \item For $k \ge 0$: $h(a_{\text{sol}}^{(j)}(T^{\star}_{k+1}-1)) = k$ for $0 \le j \le m$. Consequently, $a_{\text{sol}}^{(m)}(T^{\star}_{k+1}-1) = T^{\star}_k$. (Property $P1_{\star}$)
    \item For $k \ge 1$: $h(a_{\text{sol}}^{(j)}(T^{\star}_k)) = k-1$ for $1 \le j \le m$. Consequently, $a_{\text{sol}}^{(m)}(T^{\star}_k) = T^{\star}_{k-1}$. (Property $P2_{\star}$)
\end{enumerate}
\end{lemma}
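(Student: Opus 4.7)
The plan is to prove both $P1_\star$ and $P2_\star$ by induction on $j$, exploiting two ingredients already in place: the block structure $h(y)=\ell \iff T^\star_\ell \le y < T^\star_{\ell+1}$ from Definitions \ref{def:h_shifted} and \ref{def:Tk_shifted}, and the identity $a_{\text{sol}}(y) = y - h(y)$, which, whenever $h$ is constantly equal to $\ell$ on a range, turns each application of $a_{\text{sol}}$ into a fixed subtraction by $\ell$.

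For $P1_\star$, I will set $y_j = a_{\text{sol}}^{(j)}(T^\star_{k+1}-1)$ and first check the base case $h(y_0)=k$ directly from the definition of $T^\star_{k+1}$. The inductive step uses the hypothesis $h(y_{j-1})=k$ to write $y_j = y_{j-1}-k$, yielding the closed form $y_j = (T^\star_{k+1}-1)-jk$. It then remains to verify the containment $T^\star_k \le y_j < T^\star_{k+1}$ for all $0 \le j \le m$; substituting the formulas for $T^\star_k$ and $T^\star_{k+1}$ reduces the nontrivial lower bound to the inequality $m \ge j$, which holds by hypothesis. Plugging $j=m$ into the closed form collapses $y_m$ to $T^\star_k$, giving the consequence.

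For $P2_\star$, the strategy is symmetric. I will set $y_j = a_{\text{sol}}^{(j)}(T^\star_k)$, observe $y_1 = T^\star_k - k$, verify $h(y_1)=k-1$ by direct comparison with $T^\star_{k-1}$ and $T^\star_k$, and then induct to obtain $y_j = y_1 - (j-1)(k-1)$ together with $h(y_j) = k-1$ for $1 \le j \le m$. The containment $T^\star_{k-1} \le y_j < T^\star_k$ simplifies, after substituting the explicit formulas for the $T^\star$ values, to $(k-1)(m-j) \ge 0$, which holds for $k \ge 1$ and $j \le m$. At $j=m$, the closed form collapses to $T^\star_{k-1}$ by a routine identity, giving the consequence.

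The only substantive obstacle is showing that the iterates never leak out of the block where $h$ is constant, so that each application of $a_{\text{sol}}$ really does subtract the same value. This confinement hinges on the appearance count $N_k = mk+1$ being matched to the iteration depth $m$: after exactly $m$ subtractions of $k$ (respectively $k-1$) the iterate lands precisely on the adjacent boundary rather than overshooting it. The monotonicity from Lemma \ref{lem:mono_shifted} provides a sanity check, but the exact landing on $T^\star_k$ (respectively $T^\star_{k-1}$) at the $m$-th step is what makes the induction close, and verifying that single identity is the one calculation that cannot be avoided.
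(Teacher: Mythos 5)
Your proposal is correct and follows essentially the same route as the paper's proof: induction on $j$ within a block where $h$ is constant, the closed forms $y_j = (T^{\star}_{k+1}-1)-jk$ and $y_j = (T^{\star}_k - k)-(j-1)(k-1)$, block-containment checks that reduce to $(m-j)k \ge 0$ and $(m-j)(k-1) \ge 0$, and the exact landing on $T^{\star}_k$ (resp.\ $T^{\star}_{k-1}$) at $j=m$. Your product forms of the inequalities even handle the degenerate cases $k=0$ (in $P1_{\star}$) and $k=1$ (in $P2_{\star}$) uniformly, which the paper treats by separate short arguments.
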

\begin{proof}
Let $T_v^{\text{orig}} = m\binom{v}{2}+v$. Recall $T^{\star}_v = T_v^{\text{orig}}+1$ for $v \ge 0$ (with $T_0^{\text{orig}}=0$).

1.  Proof of Property $P1_{\star}$ (argument $n_0 = T^{\star}_{k+1}-1 = T_{k+1}^{\text{orig}}$, giving $h(n_0)=k$ for $k \ge 0$):
    Let $n_j = a_{\text{sol}}^{(j)}(n_0)$. We show by induction on $j$ that $h(n_j)=k$ for $0 \le j \le m$.
    The base case $j=0$ implies $n_0 = T^{\star}_{k+1}-1$, so $h(n_0)=k$. This is true.
    Assume $h(n_i)=k$ for all $0 \le i < j$ (for $j \ge 1$). Then $n_j = n_{j-1}-k = \dots = n_0 - jk = T_{k+1}^{\text{orig}} - jk$.
    To show $h(T_{k+1}^{\text{orig}}-jk)=k$, we must verify $T^{\star}_k \le T_{k+1}^{\text{orig}}-jk < T^{\star}_{k+1}$, which is $T_k^{\text{orig}}+1 \le T_{k+1}^{\text{orig}}-jk < T_{k+1}^{\text{orig}}+1$.
    The upper inequality, $T_{k+1}^{\text{orig}}-jk < T_{k+1}^{\text{orig}}+1 \iff -jk < 1$, holds for $j \ge 0, k \ge 0$.
    The lower inequality, $T_k^{\text{orig}}+1 \le T_{k+1}^{\text{orig}}-jk$, simplifies to $j \le m$ if $k>0$. If $k=0$, then $n_0=T_1^{\text{orig}}=1$. Since $h(1)=0$ and $a_{\text{sol}}(1)=1$, it follows $n_j=1$ for all $j$. We need $h(1)=0=k$. The condition $T_0^{\star} \le 1 < T_1^{\star}$ (i.e., $1 \le 1 < 2$) holds.
    Thus, $h(n_j)=k$ for $0 \le j \le m$.
    Consequently, $a_{\text{sol}}^{(m)}(T^{\star}_{k+1}-1) = T_{k+1}^{\text{orig}} - mk$.
    The equality $T_{k+1}^{\text{orig}} - mk = T^{\star}_k (= T_k^{\text{orig}}+1)$ requires $(m\frac{(k+1)k}{2}+(k+1)) - mk = m\frac{k(k-1)}{2}+k+1$. This simplifies to $m\frac{k^2-k}{2} = m\frac{k^2-k}{2}$, which is true. So $P1_{\star}$ holds.

2.  Proof of Property $P2_{\star}$ (argument $n_0 = T^{\star}_k = T_k^{\text{orig}}+1$, $h(n_0)=k$ for $k \ge 1$):
    Let $n_j = a_{\text{sol}}^{(j)}(n_0)$. We show by induction on $j$ that $h(n_j)=k-1$ for $1 \le j \le m$.
    For $j=1$: $n_1 = a_{\text{sol}}(n_0) = (T_k^{\text{orig}}+1)-k = m\binom{k}{2}+1$.
    To show $h(n_1)=k-1$, we verify $T^{\star}_{k-1} \le n_1 < T^{\star}_k$. This is $T_{k-1}^{\text{orig}}+1 \le m\binom{k}{2}+1 < T_k^{\text{orig}}+1$.
    The right inequality $m\binom{k}{2}+1 < T_k^{\text{orig}}+1 \iff m\binom{k}{2} < m\binom{k}{2}+k$ is true for $k>0$.
    The left inequality $T_{k-1}^{\text{orig}}+1 \le m\binom{k}{2}+1 \iff m\binom{k-1}{2}+(k-1) \le m\binom{k}{2}$ is true for $k \ge 1$. So $h(n_1)=k-1$.
    Assume $h(n_i)=k-1$ for $1 \le i \le j-1 < m$. Then $n_j = n_1-(j-1)(k-1) = (m\binom{k}{2}+1)-(j-1)(k-1)$.
    To show $h(n_j)=k-1$, we verify $T^{\star}_{k-1} \le n_j < T^{\star}_k$. The upper bound requires $m\binom{k}{2}+1-(j-1)(k-1) < T_k^{\text{orig}}+1$, which is true. The lower bound requires $T_{k-1}^{\text{orig}}+1 \le (m\binom{k}{2}+1)-(j-1)(k-1)$, which simplifies to $j(k-1) \le m(k-1)$, true for $j \le m$.
    Thus, $h(a_{\text{sol}}^{(j)}(T^{\star}_k))=k-1$ for $1 \le j \le m$.
    Consequently, $a_{\text{sol}}^{(m)}(T^{\star}_k) = (m\binom{k}{2}+1) - (m-1)(k-1)$.
    This must equal $T^{\star}_{k-1} = T_{k-1}^{\text{orig}}+1 = m\binom{k-1}{2}+(k-1)+1$.
    The equality $(m\binom{k}{2}+1) - (m-1)(k-1) = m\binom{k-1}{2}+k$ is true. For $k=1$, $a_{\text{sol}}^{(m)}(T_1^{\star})=a_{\text{sol}}^{(m)}(2)$. Since $h(2)=1$, $a_{\text{sol}}(2)=1$. As $a_{\text{sol}}(1)=1$, $a_{\text{sol}}^{(j)}(1)=1$. It follows that $a_{\text{sol}}^{(m)}(2)=1$. And $T_0^{\star}=1$. So the equality holds. For $k>1$, dividing the equivalent form $m\frac{k(k-1)}{2} - (m-1)(k-1) = m\frac{(k-1)(k-2)}{2} + (k-1)$ by $(k-1)$ gives $m\frac{k}{2} - (m-1) = m\frac{k-2}{2} + 1$, which is true. So $P2_{\star}$ holds.
\end{proof}

\section{Proof of the Key Identity (\ref{eq:key_identity_shifted})} \label{sec:proof_key_identity}
We prove $h(n+1)-1 = h(a_{\text{sol}}^{(m)}(n))$ for $n \ge 1$. Let $k=h(n)$. The proof structure follows Iannucci and Mills-Taylor \cite[Proof of Theorem 1]{Iannucci1999}.

\begin{caseenv}[$n = T^{\star}_{k+1}-1$ for some $k \ge 0$] \label{case:key_identity_boundary}
In this situation, $h(n)=k$ and $h(n+1)=h(T^{\star}_{k+1})=k+1$.
The identity (\ref{eq:key_identity_shifted}) becomes $(k+1)-1 = h(a_{\text{sol}}^{(m)}(T^{\star}_{k+1}-1))$.
So we need to show $k = h(a_{\text{sol}}^{(m)}(T^{\star}_{k+1}-1))$.
By Lemma \ref{lem:P_shifted}(1) (Property $P1_{\star}$), $a_{\text{sol}}^{(m)}(T^{\star}_{k+1}-1) = T^{\star}_k$.
Therefore, $h(a_{\text{sol}}^{(m)}(T^{\star}_{k+1}-1)) = h(T^{\star}_k) = k$.
The identity holds. This case covers $n=1$ (by setting $k=0$, then $n=T^{\star}_1-1=1$).
\end{caseenv}

\begin{caseenv}[$T^{\star}_k \le n < T^{\star}_{k+1}-1$ for some $k \ge 0$] \label{case:key_identity_internal}
This case requires $N_k=mk+1 > 1$. Since $m \ge 1$, this implies $k \ge 1$. (If $k=0$, $N_0=1$, then $T^{\star}_0=1$ and $T^{\star}_1=2$. The interval $T^{\star}_0 \le n < T^{\star}_1-1$ becomes $1 \le n < 1$, which is empty. Thus, this case applies for $k \ge 1$.)
Here, $h(n)=k$ and, since $n < T^{\star}_{k+1}-1 \implies n+1 < T^{\star}_{k+1}$, we have $h(n+1)=k$.
The identity (\ref{eq:key_identity_shifted}) becomes $k-1 = h(a_{\text{sol}}^{(m)}(n))$.
This requires proving $T^{\star}_{k-1} \le a_{\text{sol}}^{(m)}(n) < T^{\star}_k$.

By Lemma \ref{lem:mono_shifted}, $a_{\text{sol}}^{(m)}(n)$ is non-decreasing.
Since $T^{\star}_k \le n < T^{\star}_{k+1}-1$:
$a_{\text{sol}}^{(m)}(T^{\star}_k) \le a_{\text{sol}}^{(m)}(n)$.
Using Lemma \ref{lem:P_shifted}(2) (Property $P2_{\star}$), $a_{\text{sol}}^{(m)}(T^{\star}_k)=T^{\star}_{k-1}$ (for $k \ge 1$).
So $T^{\star}_{k-1} \le a_{\text{sol}}^{(m)}(n)$. This establishes $h(a_{\text{sol}}^{(m)}(n)) \ge k-1$.

For the strict upper bound $a_{\text{sol}}^{(m)}(n) < T^{\star}_k$:
We know $a_{\text{sol}}^{(m)}(T^{\star}_{k+1}-1) = T^{\star}_k$ from Lemma \ref{lem:P_shifted}(1).
Since $n < T^{\star}_{k+1}-1$ and $a_{\text{sol}}^{(m)}(x)$ is non-decreasing (Lemma \ref{lem:mono_shifted}), we have $a_{\text{sol}}^{(m)}(n) \le a_{\text{sol}}^{(m)}(T^{\star}_{k+1}-1) = T^{\star}_k$.
Suppose, for contradiction, that $a_{\text{sol}}^{(m)}(n_0) = T^{\star}_k$ for some $n_0$ in the range $T^{\star}_k \le n_0 < T^{\star}_{k+1}-1$.
Then $h(a_{\text{sol}}^{(m)}(n_0)) = h(T^{\star}_k) = k$.
Substituting this into the identity we are trying to prove ($h(n_0+1)-1 = h(a_{\text{sol}}^{(m)}(n_0))$):
Since $n_0 < T^{\star}_{k+1}-1$, we have $h(n_0+1)=k$.
So the identity would read $k-1 = k$, which is a contradiction.
Therefore, our assumption ($a_{\text{sol}}^{(m)}(n_0) = T^{\star}_k$) must be false for any $n_0 < T^{\star}_{k+1}-1$.
Thus, $a_{\text{sol}}^{(m)}(n) < T^{\star}_k$.

Combining $T^{\star}_{k-1} \le a_{\text{sol}}^{(m)}(n)$ and $a_{\text{sol}}^{(m)}(n) < T^{\star}_k$, we conclude $h(a_{\text{sol}}^{(m)}(n))=k-1$.
The identity holds in this case for $k \ge 1$.
\end{caseenv}

Both cases cover all $n \ge 1$. Thus, Identity (\ref{eq:key_identity_shifted}) is proven for $n \ge 1$.
This completes the proof of the Main Theorem \ref{thm:main_article}.

\section{Combinatorial Interpretation, OEIS Connections, and Partial Sums} \label{sec:comb_interp}

\subsection{Combinatorial Interpretation of $a(n)$}
The sequence $a(n)=n-h(n)$ (with $a(1)=1, h(1)=0$) has a clear combinatorial interpretation, consistent with its structure as a conditionally defined slowly growing sequence as discussed in Section \ref{sec:intro}.
Recall $h(n)=H_m^{\text{floor}}(n-1)$, where $H_m^{\text{floor}}(x)$ (the floor term in Eq. (\ref{eq:asol_explicit})) is the largest integer $k$ such that $T_k^{\text{orig}} = m\binom{k}{2}+k \le x$.
The value $h(n)$ counts how many of the positive "special numbers" $T_j^{\text{orig}}$ (for $j \ge 1$) are less than or equal to $n-1$. Let $S'_m = \{T_j^{\text{orig}} \mid j \ge 1\}$ be the set of these non-zero numbers.
Then $h(n) = |\{x \in S'_m \mid x \le n-1\}|$.
The solution $a(n) = n - h(n)$ can be rewritten as $a(n) = 1 + (n-1) - |\{x \in S'_m \mid x \le n-1\}|$.
The term $(n-1) - |\{x \in S'_m \mid x \le n-1\}|$ is the count of positive integers $x \le n-1$ that are *not* in $S'_m$.
Therefore,
$$a(n) = 1 + (\text{count of positive integers } x < n \text{ such that } x \notin S'_m)$$
This implies that $a(n)$ satisfies the simple recurrence: $a(1)=1$, and for $n \ge 1$, $a(n+1) = a(n) + \indicator{n \notin S'_m}$. The Main Theorem \ref{thm:main_article} thus establishes that any sequence generated by this simple additive rule also satisfies the complex nested recurrence (\ref{eq:main_recurrence_intro_std}).

This interpretation is consistent with entries in The On-Line Encyclopedia of Integer Sequences (OEIS) \cite{OEIS}:
\begin{itemize}
    \item For $m=1$: $S'_1 = \{j(j+1)/2 \mid j \ge 1\}$ (triangular numbers). $a(n)$ is \oeislink{A122797} ("$a(k+1)=a(k)$ if $k$ is triangular, $a(k+1)=a(k)+1$ otherwise, $a(1)=1$"). The sequence $h(n)=H_1^{\text{floor}}(n-1)$ is \oeislink{A003056}$(n-1)$ ("number of positive triangular numbers $\le n-1$"). For $n=1, \dots, 6$: $h(n)$ is $0,1,1,2,2,2$; $a(n)$ is $1,1,2,2,3,4$.
    \item For $m=2$: $S'_2 = \{j^2 \mid j \ge 1\}$ (square numbers). $a(n)-1 = (n-1) - \floor{\sqrt{n-1}}$ is \oeislink{A028391}$(n-1)$ ("number of non-squares $\le n-1$"). The sequence $h(n)=H_2^{\text{floor}}(n-1)$ is \oeislink{A000196}$(n-1)$ ("number of positive squares $\le n-1$").
    \item For $m=3$: $S'_3 = \{j(3j-1)/2 \mid j \ge 1\}$ (pentagonal numbers). $a(n)-1 = (n-1) - H_3^{\text{floor}}(n-1)$ is \oeislink{A180446}$(n-1)$ ("number of non-pentagonal numbers $\le n-1$"). The sequence $h(n)=H_3^{\text{floor}}(n-1)$ is \oeislink{A180447}$(n-1)$.
    \item For $m=4$: $S'_4 = \{j(2j-1) \mid j \ge 1\}$ (hexagonal numbers). $h(n)=H_4^{\text{floor}}(n-1)$ is \oeislink{A351846}$(n-1)$ ("number of positive hexagonal numbers $\le n-1$"). Then $a(n)=n-\oeislink{A351846}(n-1)$.
\end{itemize}

\subsection{Combinatorial Interpretation of Partial Sums for $m=2$}
Let $A_m(n) = \sum_{i=1}^n a(i)$ be the partial sums of the sequence $a(i)$ from Theorem \ref{thm:main_article}.
For $m=2$, $a(n) = n - \floor{\sqrt{n-1}}$ for $n \ge 1$.
The sequence of partial sums $A_2(n)$ is \oeislink{A196126}.
This sequence $S(n) = \text{\oeislink{A196126}}(n)$ is defined combinatorially as:
$$S(n) = |\{(x,y) \in \Zplus \times \Zplus \mid y \le x \le y^2 \text{ and } x \le n\}|$$
To verify this, we show that the first differences of $S(n)$ match our $a(n)$ for $m=2$. Let $S(0)=0$.
For $n \ge 1$, $S(n)-S(n-1)$ is the number of pairs $(x,y) \in \Zplus \times \Zplus$ such that $x=n$ and $y \le n \le y^2$.
The condition $y \le n \le y^2$ means we count integers $y \ge 1$ such that $\ceil{\sqrt{n}} \le y \le n$.
The number of such integers $y$ is $n - \ceil{\sqrt{n}} + 1$.
We use the identity $\ceil{\sqrt{n}} - 1 = \floor{\sqrt{n-1}}$ for every $n \ge 1$.
Thus, $n - \ceil{\sqrt{n}} + 1 = n - (\floor{\sqrt{n-1}} + 1) + 1 = n - \floor{\sqrt{n-1}}$.
This is precisely $a(n)$ for $m=2$.
Therefore, $A_2(n)=S(n)$, providing a combinatorial interpretation for these partial sums.

\section{Note on an Alternative Indexing Scheme} \label{sec:alt_indexing_remarks}
This section provides additional context on indexing for completeness and is not strictly necessary for the main results proven above.
Consider a sequence $h_m^{(0)}(x)$ defined such that $h_m^{(0)}(0)=0$ and where each integer $k \ge 0$ appears $mk+1$ times (i.e., a 0-indexed $k$-appearance sequence). Its explicit formula is $H_m^{\text{floor}}(x) = \floor{\frac{m-2+\sqrt{(m-2)^2+8mx}}{2m}}$.

If one were to define $a_{\text{0-idx}}(n) = n - h_m^{(0)}(n)$, then $a_{\text{0-idx}}(0)=0$. For $m \ge 1$, since $h_m^{(0)}(1)=1$ (as $T_1^{\text{orig}}=1$), it follows that $a_{\text{0-idx}}(1)=1-h_m^{(0)}(1)=1-1=0$.
The sequence $a_{\text{0-idx}}(n)$ can be shown to satisfy the recurrence (\ref{eq:main_recurrence_intro_std}) but with the initial condition $a(0)=0$ (which implies $a(1)=0$).

The sequence $h(n)$ used throughout our proof (Definition \ref{def:h_shifted}) was specifically constructed with $h(1)=0$. This ensured that our solution $a(n)=n-h(n)$ met the required initial condition $a(1)=1$ for the recurrence (\ref{eq:main_recurrence_intro_std}) as investigated in this paper (and as specified in \cite{Iannucci1999}, considering the errata \cite{Iannucci1999errata}). This $h(n)$ is related to $h_m^{(0)}(x)$ by $h(n) = h_m^{(0)}(n-1)$ for $n \ge 1$.

Consequently, the solution to recurrence (\ref{eq:main_recurrence_intro_std}) with $a(1)=1$ is explicitly:
$$a(n) = n - h(n) = n - h_m^{(0)}(n-1) = n - \floor{\frac{m-2+\sqrt{(m-2)^2+8m(n-1)}}{2m}}$$
The distinction in the argument of the floor function ($n$ versus $n-1$) is solely determined by the choice of the initial condition for $a(n)$. A formula involving "$8mn$" in the square root (as sometimes seen in direct definitions of $h_m^{(0)}(n)$) corresponds to a sequence $a(n)$ starting with $a(1)=0$ (resulting from $a(0)=0$).

\section{Conclusion} \label{sec:conclusion}
We have formally defined a sequence $a(n)$ via the nested recurrence relation $a(n+1)=n-a^{(m)}(n)+a^{(m+1)}(n)$ with the specific initial condition $a(1)=1$. Our Main Theorem shows that its solution is $a(n)=n-h(n)$, where $h(n)$ is an arithmetically defined sequence starting with $h(1)=0$ in which each integer $k \ge 0$ appears $mk+1$ times. The proof involved transforming the recurrence into a key identity $h(n+1)-1=h(a^{(m)}(n))$, which was then established using arithmetical lemmas detailing the behavior of $a^{(m)}(n)$ at critical boundary indices $T^{\star}_k$. This work connects the solution of this family of nested recurrences, which generalize Connell's sequence as studied by Iannucci and Mills-Taylor, to precisely characterized $k$-appearance sequences. The combinatorial interpretation of $a(n)$ as $1$ plus the count of positive integers $x<n$ not belonging to the set of generalized $m$-polygonal numbers $\{m\binom{j}{2}+j \mid j \ge 1\}$, further enriches its understanding. This dual characterization -- a simple conditional increment rule on one hand, and a complex nested recurrence on the other -- highlights a fascinating structure within these sequences.

Future work could explore two main avenues:
\begin{enumerate}
    \item The development of direct combinatorial proofs (e.g., bijective arguments or tree-grafting interpretations in the spirit of \cite{Deugau2009Ruskey, Fox2022}) for the key identity $h(n+1)-1=h(a^{(m)}(n))$ or for the main recurrence (\ref{eq:main_recurrence_intro_std}) itself. Such proofs could unveil deeper structural reasons for the connection between the conditional increments and the nested recurrence without relying on first knowing the solution form $n-h(n)$.
    \item Investigation into the behavior of the recurrence (\ref{eq:main_recurrence_intro_std}) under variations, such as different initial conditions beyond $a(1)=1$, or modifications to the rule $N_k = mk+1$ that defines the underlying $k$-appearance sequence $h(n)$, and analyzing the resulting solution structures.
\end{enumerate}

\end{document}